\newtheorem{theorem}{Theorem}[section]
\newtheorem{prop}[theorem]{Proposition}
\newtheorem{cor}[theorem]{Corollary}
\theoremstyle{definition}
\newtheorem{definition}[theorem]{Definition}
\newtheorem{example}[theorem]{Example}
\newtheorem{nota}[theorem]{Notation}
\newtheorem{notar}[theorem]{Notation and Remarks}
\theoremstyle{remark}
\newtheorem{remark}[theorem]{Remark}
\newtheorem{remarks}[theorem]{Remarks}
\numberwithin{equation}{section}
\def\C{{\mathbb{C}}}
\def\R{{\mathbb{R}}}
\def\Z{{\mathbb{Z}}}
\def\G{{\Gamma}}
\def\go{{\G_1}}
\def\gt{{\G_2}}
\def\bs{{\backslash}}
\def\nab{{\nabla}}
\def\Lk{{L^{\otimes k}}}
\def\Spec{\operatorname{\textit{Spec}}}
\def\vol{\operatorname{vol}}
\begin{document}

\begin{title}[Quantum Equivalent Magnetic Fields]{Quantum Equivalent Magnetic
Fields that Are Not Classically
Equivalent\\
Champs magn\'etiques quantiquement \'equivalents mais classiquement
non-\'equivalents}\end{title}

\author{Carolyn Gordon}
\address{Department of Mathematics, Dartmouth College, Hanover, NH  03755}
\email{csgordon@dartmouth.edu}
\thanks{The first author and last authors were supported in part by NSF Grants
DMS 0605247 and DMS 0906169.   The third author was partially supported by
DFG Sonderforschungsbereich 647.}
\author{William D. Kirwin}
\address{CAMGSD, Departamento de Matematica,
Instituto Superior Tecnico,
Av. Rovisco Pais,
1049-001 Lisboa, PORTUGAL}
\email{will.kirwin@gmail.com}

\author{Dorothee Schueth}
\address{Institut f\"ur Mathematik, Humboldt-Universit\"at zu
Berlin, D-10099 Berlin, Germany}
\email{schueth@math.hu-berlin.de}

\author{David Webb}
\address{Department of Mathematics, Dartmouth College, Hanover, NH  03755}

\email{david.l.webb@dartmouth.edu}

\subjclass{Primary 58J53; Secondary 53C20}

\dedicatory{Dedicated to Pierre B\'erard and Sylvestre Gallot on the occasion of
their  sixtieth birthdays.}

\begin{abstract}
We construct pairs of compact K\"ahler-Einstein manifolds
$(M_i,g_i,\omega_i)$ ($i=1,2)$ of complex dimension $n$ with the
following properties: The canonical line bundle $L_i=\bigwedge^n T^*M_i$
has Chern class $[\omega_i/2\pi]$, and for each integer $k$ the tensor powers
$L_1^{\otimes k}$ and $L_2^{\otimes k}$ are isospectral for
the bundle Laplacian associated with the canonical connection, while $M_1$
and $M_2$ -- and hence $T^*M_1$ and $T^*M_2$ -- are not homeomorphic.
In the context of geometric quantization, we interpret these examples as
magnetic fields which are quantum equivalent but not classically
equivalent. Moreover, we construct many examples of line bundles $L$,
pairs of potentials $Q_1$, $Q_2$ on the base manifold, and pairs of
connections $\nabla_1$, $\nabla_2$ on $L$ such that for each integer $k$
the associated Schr\"odinger operators on $L^{\otimes k}$ are isospectral.

\medskip
\noindent
R\'esum\'e:
On construit des couples de vari\'et\'es de K\"ahler-Einstein compactes
$(M_i,g_i,\omega_i)$ ($i=1,2$) de dimension complexe $n$
avec les propri\'et\'es suivantes: La premi\`ere classe de Chern associ\'ee
au fibr\'e en droites canonique $L_i=\bigwedge^n T^* M_i$ est
$\omega_i/2\pi$, et pour tout entier $k$, les puissances tensorielles
$L_1^{\otimes k}$ et $L_2^{\otimes k}$ sont isospectrales pour le
Laplacien associ\'e \`a la connexion canonique, mais $M_1$ et $M_2$
-- et, en cons\'equence, $T^*M_1$ et $T^*M_2$ -- ne sont pas
hom\'eomorphes. Dans le contexte de la quantification g\'eom\'etrique,
nous interpr\'etons ces examples comme des champs magn\'etiques qui sont
\'equivalents au sens quantique mais pas au sens classique.
En plus, on construit beaucoup d'exemples de fibr\'es en droites $L$,
de couples de potentiels $Q_1$, $Q_2$ sur la vari\'et\'e de base
et de couples de connexions $\nabla_1$, $\nabla_2$ telles que pour
tout entier $k$ les op\'erateurs de Schr\"odinger associ\'es
sur $L^{\otimes k}$ soient isospectraux.
\end{abstract}
\maketitle

\tableofcontents

\section{Introduction}

Let $L$ be a Hermitian line bundle over a closed Riemannian manifold $(M,g)$.
The Riemannian metric $g$ on $M$ and the connection $\nab$ on $L$
together give rise to a Laplace operator $\Delta$ acting on the space
$C^\infty(M,L)$ of smooth sections of $L$ by
\begin{equation}\label{BundleLaplacian}
\Delta=-\rm{trace}(\nab^2),
\end{equation}
where
\[
\begin{CD}
C^\infty(M,L)\xrightarrow{\nabla}  C^\infty(T^*M\otimes L)\xrightarrow{\nabla}
C^\infty(T^*M\otimes T^*M\otimes L)
\end{CD}
\]
are the connections on $L$ and on $T^*M\otimes L$ (the latter is obtained from
the Levi-Civita connection on $T^*M$ and the given
connection $\nab$ on $L$; we denote it by $\nab$ as well)
and the trace is with respect to the Riemannian metric $g$.
The connection $\nab$ gives rise to a connection,
and thus also a
Laplacian, on the $k$th tensor power $\Lk$ of $L$ over $M$ for each integer $k$.
We will denote its spectrum, which is necessarily discrete, by $Spec(L,\nab,
k)$.

How much information is
encoded in these spectra?  For example, do they determine the connection?  The
curvature of the connection?  The Chern class of the bundle?  The geometry of
the base manifold?  We will primarily focus on a variant of the second question.

A closed 2-form $\omega$ on a Riemannian manifold $(M,g)$ is sometimes viewed as
a magnetic field.   The classical Hamiltonian system for a
charged particle
moving in the magnetic field is given by $(T^{\ast}M, \Omega, H)$.  Here
$\Omega$ is the symplectic structure on the phase space $T^*M$ given by
$\Omega:=\omega_{0}+\pi^{\ast}\omega$, where $\omega_{0}$ is the Liouville form
and $\pi:T^{\ast}M\rightarrow M$ is the projection,
and the Hamiltonian $H$ is given by $H(q,\xi)=\frac{1}{2}g_q(\xi,\xi).$   If
$\frac{1}{2\pi}\omega$ represents an integer cohomology class, then there exists
a
complex line bundle $L$ with Chern class $[\frac{1}{2\pi}\omega]$.  Endow $L$
with a
Hermitian structure and a Hermitian connection with curvature $-i\omega$.
Through the procedure of geometric quantization, the space of square integrable
sections of $\Lk$ is viewed as the ``quantum Hilbert space,'' and the quantum
Hamiltonian is the operator
$\widehat{H}_{k}=-\frac{\hbar^{2}}{2}(-\Delta-\tfrac{1}{6}R)$ with
$\hbar=\frac{1}{k}$, where $R$ is the scalar curvature of $M$.   Thus we ask:

\begin{itemize}
\item Does the collection of all $Spec(L,\nab,k)$, $k\in\Z$, determine the
symplectic structure $\Omega$ on $T^*M$?  That is, does ``quantum equivalence''
of two
magnetic fields imply their ``classical equivalence''?
\end{itemize}

We answer this question negatively by example.
We consider the case in which $(M,g,\omega)$ is a K\"{a}hler manifold;
in fact, we focus on Hermitian locally symmetric spaces of noncompact type,
normalized such that the Einstein constant is~$-1$.
For such spaces, the line bundle with Chern
class~$[\omega/2\pi]$ is the canonical line bundle of $(M,g,\omega)$. We will
show that for every normalized, simply-connected irreducible Hermitian
symmetric space~$X$
of noncompact type of real dimension at least four, there exist arbitrarily
large finite families of Hermitian locally symmetric spaces
$(M_{i},g_{i},\omega_{i})$ covered by~$X$ such that
${\Spec}(L_{i},\nabla_{i},k)={\Spec}(L_{j},\nabla_{j},k)$ for all $k$ and all
$i,j$ (where $\nab_i$ is the canonical connection on the canonical line bundle)
but such that the cotangent bundles of the various $M_{i}$ are mutually
non-homeomorphic.
Hence, the phase spaces $(T^*M_j, \Omega_j)$ for the magnetic flows of the
various
$(M_{j},\omega_{j})$ are not symplectomorphic, and yet the measurable quantum
energy spectra are the same.  Our method is based on Sunada's isospectrality
technique along with D. B. McReynolds's recent construction of arbitrarily large
finite families of mutually isospectral locally symmetric spaces.

In the example outlined above, the classical phase spaces of the ``quantum
equivalent'' systems fail not only to be symplectomorphic, but even to be
homeomorphic.
In a companion article, we will
construct by a different method an example of quantum equivalent magnetic fields
on a fixed manifold $M$ (a torus) for which the associated symplectic structures
on $T^*M$ are not symplectomorphic.

Our technique is similar to that of R. Kuwabara \cite{Ku90}, who constructed
pairs of connections on a fixed line bundle $L$ over, for example, a Riemann
surface $M$ such that $Spec(L,\nab_1,k)=Spec(L,\nab_2,k)$ for all $k$.   In the
final section of this paper, we review and slightly extend his construction.

The paper is organized as follows:  In Section 2, we describe some of the
relevant framework of geometric
quantization, which will allow for a physical interpretation of the
isospectrality
results. This material is of course well-known to experts in geometric
quantization, but
we include it here in the hopes that it may be of interest to a wider audience.
In Section 3, we describe Sunada's technique in our context and show how it
leads to the examples described above of Hermitian locally symmetric spaces (of
real dimension four and higher) that are quantum equivalent  but not classically
equivalent.  We also address the case of Riemann surfaces.  Finally, in Section
4, we consider isospectral connections and potentials on a fixed line bundle.

This
article, like many others of the authors, was influenced by Pierre B\'erard's
work.
We are pleased to celebrate many years of friendship on the occasion of
his birthday.

\section{Geometric quantization}

\subsection{Hamiltonian system associated with a magnetic field}\label{class}\

On $\R^3$, a magnetic field may be viewed as an exact 2-form $\omega$,
identified with the curl of the magnetic potential field $A$.   The $1$-form
$\alpha=A^\flat$ defines a connection $\nab:=d-i\alpha$ on the
(trivial) Hermitian line bundle $\R^3\times \C$ with curvature
$-i\omega=-i\,d\alpha$.\footnote{The appearance of $i=\sqrt{-1}$ here is a
matter
of convention. We choose the convention which is common in mathematics,
specifically in geometric quantization.}

In analogy with the situation in $\R^3$, a closed 2-form $\omega$ on a
Riemannian manifold $(M,g)$ can be interpreted as a magnetic field. The
Hamiltonian system for a charged particle
moving in the magnetic field has phase space $(T^{\ast}M, \Omega)$ with
$\Omega:=\omega_{0}+\pi^{\ast}\omega$, where $\omega_{0}$ is the Liouville form
on $T^*M$ (that is, $\omega_0=-d\lambda$, where $\lambda$ is the canonical
$1$-form on the cotangent bundle), and $\pi:T^{\ast}M\rightarrow M$ is the
projection; see \cite{GSS}, for example.  The
classical trajectories of the particle are given by the Hamiltonian flow of the
(kinetic energy)
Hamiltonian $H(q,\xi):=\frac{1}{2}g_q(\xi,\xi)$.  When $\omega=0$, so that
$\Omega=\omega_0$, this flow is just the usual geodesic flow describing a free
particle moving on $M$.

\begin{nota}\label{ce}  We will say that $(M_1,g_1,\omega_1)$ and
$(M_2,g_2,\omega_2)$ are \emph{classically equivalent} if the associated
Hamiltonian systems $(T^{\ast}M_1, \Omega_1, H_1)$ and $(T^{\ast}M_2,
\Omega_2, H_2)$ are equivalent.
\end{nota}

\begin{notar}\label{prep} In case $[\omega/2\pi]$ is an integral cohomology
class, let $L$ be the line bundle over $M$ with Chern class
$[\omega/2\pi]$.  Endow $L$ with a Hermitian structure and let $P$ be the
associated principal circle bundle.  Let $\nabla^L$ be a
Hermitian connection on $L$ with curvature $-i\omega$.  The connection~$\nabla$
and the Riemannian metric on~$M$ give rise to a Riemannian metric $\widetilde{g}$ on
$P$, sometimes called a Kaluza--Klein metric. Consider the associated geodesic
flow
on
$T^{\ast}P$. The circle action on the principal bundle~$P$ gives rise to a
Hamiltonian action of the circle $S^{1}$ on $T^{\ast}P$. The symplectic
reduction of the geodesic flow on $P$ by the $S^{1}$ action yields the
Hamiltonian system of the magnetic flow on $T^{\ast}M$ described above. In this
brief description we have followed Kuwabara; see~\cite{Ku03} for more
information.

In preparation for Subsection~\ref{subsec:GQ}, we note that the connection
$\nab^L$ and the Riemannian metric $g$ on
$M$ give rise to a Laplace operator $\Delta$ on the space
$C^\infty(M,L)$ of smooth sections of $L$ given by (\ref{BundleLaplacian}). By
the usual construction, $\nab^L$
induces a Laplace operator, also denoted $\Delta$, on the space
${C^\infty}(M,{L^{\otimes k}})$,
where $L^{\otimes k}$ is the $k$th tensor power of $L$.  The space
${C^\infty}(M,L^{\otimes k})$ may be identified with the space
$C^\infty_k(P)$ of smooth complex-valued functions $f$ on $P$ satisfying the
equivariance condition
$f(\alpha.x)=\alpha^{-k}f(x)$ for $\alpha\in S^{1}$ and $x\in P$. The
Laplace operator on $C^\infty(M,{L^{\otimes k}})$ is unitarily equivalent to
the restriction of $\Delta_P-4k^2\pi^2$ to $C^\infty_k(P)$, where
$\Delta_P$ is the Laplace--Beltrami operator of $(P,\widetilde{g})$.

The space of smooth sections $C^\infty(M,L^{\otimes k})$
is endowed with the standard $L^2$ inner product given for smooth sections $s$
and $t$ by
\[\langle s,t\rangle := \left(\frac{k}{2\pi}\right)^n\int s.t\
\frac{\omega^n}{n!},\]
where $s.t$ denotes the pointwise Hermitian inner product on each fibre.
This inner product defines a Hilbert space consisting of square-integrable
sections of $L^{\otimes k}$, of which the space $C^\infty(M,L^{\otimes k})$ of
smooth sections is a dense subspace. The operator $\Delta$ is an unbounded
operator on this Hilbert space with dense domain $C^\infty(M,L^{\otimes k})$.
The theory of unbounded operators on Hilbert spaces is well-developed (see, for
example, the classic texts \cite[Vol II, Chap. 8]{RS} and \cite{BS}), and we
mention here only
that $\Delta$ admits a self-adjoint extension, still denoted by $\Delta$, with
dense domain $D$ containing the space of smooth sections of $L^{\otimes k}.$ In
the following, when we say that $\Delta$ is an operator on the Hilbert space of
$L^2$-sections, it is to be understood in this usual sense of an unbounded
operator with dense domain.
\end{notar}

\subsection{Quantization of the Hamiltonian system}\label{subsec:GQ}\

Using geometric quantization, one associates to a classical mechanical system
(satisfying suitable requirements) a quantum mechanical system, consisting of a
Hilbert space $\mathcal{H}_{k}$ and a quantum Hamiltonian operator
$\widehat{H}_{k}:\mathcal{H}_{k}\to \mathcal{H}_{k}$ for each $k\in\mathbb{Z}$.
(Here Planck's constant is given by $\hbar=1/k$.)   For the Hamiltonian system
$(T^*M, \Omega,
H)$ in Subsection~\ref{class},  the quantization may be carried out provided
that
$\omega/2\pi$ represents an integral cohomology class of $M$.   In this case,
one obtains Theorem~\ref{hk} below. Following the statement of the theorem
and related remarks, we will briefly outline the procedure of geometric
quantization. For a complete presentation, see the classic references
\cite{Woo} and~\cite{Sni}; see also \cite{GS} and \cite{BW}.

\begin{theorem}\label{hk}
\label{thm:Q(H)}\cite[p. 204]{Woo}
We use
Notation~\ref{prep} and assume that $[\omega/2\pi]$ is an integral cohomology
class. The quantum Hilbert space associated to the classical Hamiltonian
system
$(T^{\ast}M,\Omega,H)$ of Subsection~\ref{class} is given for each integer $k$
by
$\mathcal{H}_{k}=L^{2}(M,L^{\otimes k})$ (the space of square-integrable
sections of $\Lk$) and the quantization of the Hamiltonian $H$ is the
(unbounded) operator
\begin{equation}\label{Hamiltonian}
\widehat{H}_{k}=-\frac{\hbar^{2}}{2}(-\Delta-\tfrac{1}{6}R)
\end{equation}
on $\mathcal{H}_{k}$, where $R$ is the scalar curvature of the metric $g$.
(Here $\hbar=
\frac{1}{k})$.
\end{theorem}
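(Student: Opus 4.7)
The plan is to follow the standard geometric quantization recipe (see \cite{Woo}) applied to $(T^*M, \Omega, H)$ with $\Omega = \omega_0 + \pi^*\omega$ and $H(q,\xi) = \tfrac{1}{2}g_q(\xi,\xi)$. The procedure has three ingredients: prequantization, choice of polarization (here vertical, with the metaplectic/half-form correction), and quantization of the Hamiltonian via the Blattner--Kostant--Sternberg (BKS) pairing. \emph{Prequantization:} Since $[\omega_0]=0$ and $[\omega/2\pi]\in H^2(M,\Z)$, the class $[k\Omega/2\pi]$ is integral for every $k\in\Z$. A prequantum line bundle over $T^*M$ with connection of curvature $-ik\Omega$ is then $\pi^*\Lk$, equipped with the tensor product of $\pi^*\nabla^L$ and the trivial connection on $T^*M\times\C$ twisted by the canonical $1$-form $\lambda$ (so that $\omega_0=-d\lambda$). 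Call this line-bundle-with-connection $\mathcal{L}_k$.

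\emph{Polarization and Hilbert space:} Take the vertical polarization $P=(\ker d\pi)\otimes\C \subset T(T^*M)\otimes\C$; it is involutive and Lagrangian. Sections of $\mathcal{L}_k$ that are covariantly constant along the leaves of $P$ are determined by their restrictions to the zero section $M\hookrightarrow T^*M$, giving a natural identification with $C^\infty(M,\Lk)$. The metaplectic correction adjoins the bundle of half-forms on $P$, which in this case is the bundle of half-densities on $M$; the resulting $L^2$-pairing, obtained by integration over $M$ (up to the normalization $(k/2\pi)^n$ coming from the Liouville volume on $T^*M$), gives the quantum Hilbert space $\mathcal{H}_k=L^2(M,\Lk)$.

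\emph{Quantization of $H$:} The Hamiltonian flow of $H$ is the magnetic flow and does not preserve $P$, so $H$ is quantized via the BKS pairing between $P$ and its pushforward under the time-$t$ Hamiltonian flow of $H$, differentiated at $t=0$. A direct local-coordinate computation, carried out in \cite[Ch.~9]{Woo}, yields on a polarized section $s\otimes\mu$ (with $s\in C^\infty(M,\Lk)$ and $\mu$ a half-density)
\[
\widehat{H}_k(s\otimes\mu) = -\tfrac{\hbar^2}{2}\bigl((-\Delta - \tfrac{1}{6}R)s\bigr)\otimes\mu,
\]
where $\Delta$ is the bundle Laplacian (\ref{BundleLaplacian}) and $R$ is the scalar curvature of $(M,g)$.

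The main obstacle is the BKS computation: one must verify that the symbol of $H$, combined with the twist of $\mathcal{L}_k$ by $\pi^*\nabla^L$, reproduces the full bundle Laplacian $\Delta$ of $\nabla^L$ rather than merely the scalar Laplacian on $M$, and that the half-form contribution yields precisely the universal $\tfrac{1}{6}R$ correction, independent of $\omega$ and $\nabla^L$. The remaining steps are essentially the standard bookkeeping of geometric quantization; since Woodhouse's presentation is local on $T^*M$, the only additional care needed is to keep the $\pi^*\nabla^L$-twist consistent throughout.
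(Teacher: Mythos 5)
Your proposal follows exactly the same route as the paper: both treat the statement as Woodhouse's result and outline the standard recipe (prequantization of $(T^*M,\Omega)$ via $\pi^*\Lk$ twisted by a primitive of $\omega_0$, the vertical polarization with half-form correction identifying $\mathcal{H}_k$ with $L^2(M,\Lk)$, and the BKS pairing for $H$), deferring the final local computation that produces $-\tfrac{\hbar^2}{2}(-\Delta-\tfrac{1}{6}R)$ to \cite[Ch.~9]{Woo}. This matches the paper's own exposition, which likewise stops at ``in our case, this yields the expression in Theorem~\ref{hk}.''
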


The allowed energy values of the charged particle in the magnetic field, which
are what one would see if one
``measured'' the energy of the quantum particle, are the eigenvalues of
$\widehat{H}_{k}$.

\begin{remark}\label{amb} The definition of the Laplacian $\Delta$ on $\Lk$,
and thus of the operators $\widehat{H}_{k}$, depends on a choice of connection on
$L$
with the specified curvature~$-i\omega$.   However, in the examples that we will
give in
Subsection~\ref{ex}, there will be a natural choice of connection
with that curvature.
\end{remark}

\begin{nota}\label{qe} Let ($M_i, g_i)$, $i=1,2$, be a compact Riemannian
manifold and
let $\omega_i$ be a closed $2$-form on $M_i$ such that $[\omega/2\pi]$
is an integral cohomology
class.   For each integer $k$, let $\widehat{H}_{k}^i:L^{2}(M,L_i^{\otimes
k})\rightarrow
L^{2}(M,L_i^{\otimes k})$ be the associated quantum Hamiltonian as given in
Theorem~\ref{hk}.
  We will say that $(M_1,
\omega_1)$ and $(M_2,\omega_2)$ are \emph{quantum equivalent} (with respect to
the connections used to define the line bundle Laplacians) if for every $k$, the
operators $\widehat{H}_{k}^1$ and $\widehat{H}_{k}^2$ have the same spectrum.
\end{nota}

We now outline the quantization procedure.  Consider the classical Hamiltonian
system $(T^*M,\Omega, H)$ given in Subsection~\ref{class}.  Recall that
$\Omega=\omega_0+\pi^*\omega$.  Let $\pi^{\ast}L$ be the pullback of
$L$  to a bundle over $T^*M$ and $\pi^*\nab^L$ the pullback of the connection.
Since the Liouville form $\omega_0$ on $T^*M$ is exact, the Hermitian line
bundle $L_\Omega$ with Chern class $[\Omega/2\pi]$ may be identified
with $\pi^{\ast}L$.  Writing $\omega_0=d\Theta$, the Hermitian connection
$\nab:=\pi^{\ast}\nab^L -i\Theta$ on $L_\Omega$ has curvature $-i\Omega$.

The \emph{prequantization} of the Hamiltonian system $(T^{\ast}M,\Omega)$ is the
space of square-integrable sections of
$L_{\Omega}^{\otimes k}$
with respect to the standard inner product
\[
\langle s,t\rangle:=\left(  \frac{k}{2\pi}\right)
^{n}\int_{T^{\ast}M}s.t~\frac{\Omega^{n}}{n!},
\]
where $s.t$ denotes the (pointwise) Hermitian product on $L_{\Omega}^{\otimes
k}.$  For a smooth function $f$ on $T^*M$ (we are interested in particular in
the Hamiltonian $H$ above), one associates a \emph{prequantum Hamiltonian
operator} $\widehat{f}^{preQ}$, given by the Kostant--Souriau construction:
\begin{equation}
\widehat{f}^{preQ}:=\frac{i}{k}\nabla_{X_{f}}^{L_{\Omega}^{\otimes k}
}+f,\label{eqn:K-Squantization}
\end{equation}
where $X_{f}$ is the Hamiltonian vector field associated to $f$, defined by
$\Omega(X_{f},\cdot)=df(\cdot)$. The Kostant--Souriau prequantization
(\ref{eqn:K-Squantization}) satisfies Dirac's quantization conditions:
\begin{enumerate}
\item the map $f\mapsto\widehat{f}$ $^{preQ}$is linear,
\item the quantization $\widehat{1}^{preQ}$ of the constant map $1$ is the identity
operator, and
\item $[\widehat{f}^{preQ},\widehat{g}^{preQ}]=-i\hbar\widehat{\{f,g\}}^{preQ},$ where
$\{\cdot,\cdot\}$ is the Poisson bracket, $[\cdot,\cdot]$ is the operator
commutator, and $\hbar=1/k$.
\end{enumerate}
Indeed, the prequantization (\ref{eqn:K-Squantization}) is derived precisely so
that it satisfies (1) -- (3) above. (See \cite[Chap. 8]{Woo}). Unfortunately,
the
pair $(L^{2}(T^{\ast}M,L_{\Omega}^{\otimes k}),\,f\mapsto\widehat{f}^{preQ})$ does
not define a ``good'' quantization, essentially because
$L^{2}(T^{\ast}M,L_{\Omega}^{\otimes k})$ is too big. For example, in the case
$\omega=0$ and $M=\mathbb{R}^{n}$, which corresponds to a free particle moving
in Euclidean space, the line bundle $L_{\Omega}^{\otimes k}$ is trivial and the
prequantum Hilbert space is then
$L^{2}(T^{\ast}\mathbb{R}^{n}=\mathbb{R}^{n}\times\mathbb{R}^{n}).$ The
variables in the first $\mathbb{R}^{n}$ factor give the position of the
particle, and the variables in the second $\mathbb{R}^{n}$ factor describe the
momentum. But one knows from quantum mechanics that a wave function cannot be
simultaneously a function of both position \emph{and} momentum.

In order to obtain a Hilbert space of the ``correct'' size, one first
chooses a polarization of $(T^*M,\Omega)$. A \emph{polarization} of a
symplectic manifold is an integrable (real or complex) Lagrangian distribution.
If, as is the case here, the phase space is a cotangent bundle, one may take the
\emph{vertical
polarization}, i.e., the distribution given by the tangent spaces to the fibers
of $T^{\ast}M$. (Note that this distribution is indeed Lagrangian with respect
to $\Omega$ as well as $\omega_0$.)  This means that we are considering wave
functions which depend only
on position, not on momentum. Thus, the vertical polarization corresponds to
the ``position-space representation'' in quantum mechanics.

Once a choice of
polarization $\mathcal{P}$ is made, one would ideally like to define the quantum
Hilbert space to be
the subspace $L^2_\mathcal{P}(T^*M,L_{\Omega}^{\otimes k})$ of the prequantum
Hilbert space $L^2(T^*M,L_{\Omega}^{\otimes k})$ consisting of those sections
that are covariantly constant in the $\mathcal{P}$ directions and then restrict
the Kostant-Souriau prequantum Hamiltonians $\widehat{f}^{preQ}$ to this subspace.
However, there are two problems here.  First, $L^2_\mathcal{P}=\{0\}$!
Secondly, even for
a polarization $\mathcal{P}$ that yields a nontrivial quantum Hilbert
space,\footnote{A
typical example of such a polarization is available whenever $T^*M$ admits a
K\"ahler structure, for example when $M$ is a compact Lie group. In this case,
one can take $\mathcal{P}$ to be the holomorphic tangent bundle.} the
Kostant-Souriau
quantization $\widehat{f}^{preQ}$ does not in
general preserve the quantum Hilbert space. Indeed, $\widehat{f}^{preQ}$ will only
preserve $\mathcal{H}_{k}$ if the Hamiltonian flow of $f$ preserves the
polarization $\mathcal{P}$. One can show that the $\Omega$-Hamiltonian flow of
$H$ does
not preserve \emph{any} polarization.

Fortunately, there is only one
more piece of the puzzle which will remedy both of the remaining problems at
once: the so-called half-form correction. The half-form correction boils down to
tensoring $L^{\otimes k}$ with a square root of the canonical bundle associated
to the polarization. Sections of such a bundle are called half-forms.

The half-form correction, due essentially to Blattner, Kostant and Sternberg,
allows one
to quantize a larger set of functions than just those whose flows preserve the
polarization, and in particular one can quantize the standard Hamiltonians which
appear in wave mechanics, of which our $H=\frac{1}{2}\Vert\xi\Vert^{2}$ is an
example. Moreover, the quantum Hilbert space associated to the vertical
polarization, in the presence of the half-form correction, will turn out to be
just $L^2(M,L^{\otimes k})$, which is exactly what one would naively expect for
the position-space representation.\footnote{There are several further
advantages, from both
the mathematical and physical viewpoints, though they are not relevant to our
current purposes. One, which is easy to describe, is that when using the BKS
construction to quantize the simple harmonic oscillator (a well-known example
from physics), a shift is introduced which results in the physically correct
energy spectrum. Specifically, without the BKS construction, one obtains an
energy spectrum consisting of integer multiples of $\hbar.$ The physically
correct spectrum, which is obtained using the BKS construction, is
$\{(n+\frac{1}{2})\hbar:n\in\mathbb{Z\}}$.}

The BKS construction in our setting is as follows. (We refer the interested
reader to \cite{Woo}, Chap. 9, for more details and proofs; see also \cite{GS}.)
Choose a line
bundle $\delta$ such that $\delta\otimes\delta=\bigwedge^{n}TM$ (this is
possible because $\bigwedge^{n}TM$ is trivializable), and let $\nu$ be a section
of $\delta$ with $\nu^{2}=\vol_g(M),$ where $\vol_g(M)$ is the Riemannian volume
form on
$M$. Sections of $\delta$ are called half-forms (associated to the vertical
polarization), and the half-form corrected
quantum Hilbert space is defined to be
\[
\widehat{H}_k:=L^{2}_{\mathcal{P}}(T^*M,L_\Omega^{\otimes k}\otimes\pi^*\delta)
\]
where the inner product is defined by the canonical pairing of half-forms. In
particular, a section of $L_\Omega\rightarrow T^*M$ which is vertically
covariantly constant is uniquely determined by its value on the zero-section
$M$, and the inner product of two such sections is therefore given
by\footnote{We will abuse notation slightly and not distinguish between $\nu$
(or
$\vol_g(M)$) and its pullback $\pi^{\ast}\nu$ (resp. $\pi^{\ast}\vol_g(M)$).}
\begin{equation}
\langle s\nu,t\nu\rangle = \left(\frac{k}{2\pi}\right)^n\int_M s.t\ \vol_g(M).
\label{eqn:half-form-pairing}
\end{equation}
Hence, we see that the quantum Hilbert space associated to the vertical
polarization can be identified with $L^2(M,L^{\otimes k}).$

\medskip

Now that we have the correct quantum Hilbert space, we need to quantize the
Hamiltonian flow of the kinetic energy $H$. Let $\rho_{t}$ denote the
Hamiltonian flow of $H$ on $T^{\ast}M$. In order to
define the quantization of the Hamiltonian $H$, we evolve $\psi\nu$ for a short
time (that is, apply $\exp(-ikt\widehat{H}^{preQ})$ to the first factor, and the
pull-back $\rho_{t}^{\ast}$ to the second factor), and then project the result
back into $\widehat{\mathcal{H}}_{k}.$

The projection is achieved by a generalization of the half-form pairing
(\ref{eqn:half-form-pairing}). One can show that the pushforward of the vertical
polarization by $\rho_{t}$ is an integrable
Lagrangian distribution which is (at least for small $t$) transverse to the
vertical polarization. Hence, there exists some function $f_{t}\in
C^{\infty}(T^{\ast}M)$ such that
$\rho_{t}^{\ast}(\vol_g(M))\wedge\vol_g(M)=f_{t}\Omega^{n}/n!$. The generalized
(BKS) half-form pairing is
then defined to be
\[
\left(  \rho_{t}^{\ast}\nu\right)  .\nu:=\sqrt{f_{t}}.
\]

This pairing can be shown to be nondegenerate (at least for small $t$), and
therefore defines a bijection between
$(\exp(-ikt\,\widehat{H}^{preQ})\otimes\rho_{t}^{\ast})\widehat{\mathcal{H}}_{k}$
and $\widehat{\mathcal{H}}_{k}.$ The quantum Hamiltonian $\widehat{Q}_{k}(H)$ is
obtained by computing the derivative with respect to $t$, evaluated at $t=0$, of
the operator on $\widehat{\mathcal{H}}_{k}$ given by first applying
$(\exp(-ikt\widehat{H}^{preQ})\otimes\rho_{t}^{\ast})$ and then projecting the
result back into $\widehat{\mathcal{H}}_{k}$ using the BKS pairing. At the end
of the day, we are really interested only in sections of $L^{\otimes k};$ thus,
we can take a section $\psi$ in $\mathcal{H}_{k},$ multiply it by $\nu$, apply
the BKS construction, and write the result in the form $\psi^{\prime}\nu.$ The
quantization of the Hamiltonian $H$ is then defined to be
$\widehat{H}_{k}\psi:=\psi^{\prime}$. In our case, this yields the expression in
Theorem~\ref{hk}.

\section{You can't hear a magnetic field}

\subsection{The Sunada technique}\label{st}

We will use a variant of Sunada's technique \cite{Su}.

\begin{definition}
\label{def.ac} Let $G$ be a finite group and let ${{\Gamma}_{1}}$ and
${{\Gamma}_{2}}$ be subgroups of $G$. We will say that ${{\Gamma}_{1}}$ is
\emph{almost conjugate} to ${{\Gamma}_{2}}$ in $G$ if
there is a bijection $\Gamma_1\to\Gamma_2$ carrying each element of $\Gamma_1$
to a conjugate element in $\Gamma_2$; equivalently,
each $G$-conjugacy class
$[g]_{G}$ intersects ${{\Gamma}_{1}}$ and ${{\Gamma}_{2}}$ in the same number of
elements.
\end{definition}

Sunada's Theorem states that if a finite group $G$ acts by isometries on a
compact Riemannian manifold $M$ and if ${{\Gamma}_{1}}$ and ${{\Gamma}_{2}}$ are
almost conjugate subgroups of $G$ acting freely on $M$, then
${{\Gamma}_{1}}\backslash M$ and ${{\Gamma}_{2}}\backslash M$ are isospectral.

\begin{remarks}\label{rem.repcond}$\text{ }$

\begin{enumerate}
\item The almost conjugacy condition is equivalent to a
representation theoretic condition as follows. The right multiplication of $G$
on the cosets in ${\Gamma}_{i}{\backslash} G$ gives rise to a natural action of
$G$ on the finite-dimensional vector space
${\mathbf{R}}[{\Gamma}_{i}{\backslash} G]$. The subgroups ${{\Gamma}_{1}}$ and
${{\Gamma}_{2}}$ of $G$ are almost conjugate if and only if there exists an
isomorphism
\[
\tau: {\mathbf{R}}[{{\Gamma}_{1}}{\backslash} G]\to{\mathbf{R}}[{{\Gamma}_{2}
}{\backslash} G]
\]
intertwining the actions of $G$.

\item Assume that ${{\Gamma}_{1}}$ and ${{\Gamma}_{2}}$ are almost conjugate in
$G$ and let $\tau$ be the intertwining map in (i). Let $W$ be any vector space
on which $G$ acts on the right. For $i=1,2$, let $W^{{\Gamma}_{i}}$ be the
subspace of vectors fixed by all elements of ${\Gamma}_{i}$. Then $\tau$ gives
rise to a linear isomorphism, called ``transplantation''
\[
\mathcal{T}:W^{{{\Gamma}_{2}}}\to W^{{{\Gamma}_{1}}}.
\]
Transplantation was first introduced in an example in \cite{Bu86} and
systematized in \cite{Be92} to give a new proof of Sunada's Theorem; see also
\cite{Z}. We are following the presentation in \cite{GMW}.

\item Transplantation is functorial: if $V$ and $W$ are right
$G$-spaces and $\psi:V\to W$ is a $G$-equivariant map, then the following
diagram commutes:
\[
\begin{CD}
W^{\gt} @>{\mathcal{T}_W}>> W^{\go} \\
@V{\psi}VV                 @VV{\psi}V \\
V^{\gt} @>{\mathcal{T}_V}>> V^{\go}\\
\end{CD}
\]
Moreover, if $W$ is an inner product space and if the $G$ action is unitary,
then the transplantation map is unitary.
\end{enumerate}
\end{remarks}

\begin{nota}
Given a Hermitian line bundle $L$ over a closed Riemannian manifold~$(M,g)$
and a Hermitian connection~$\nabla$ on~$L$ we denote by $\Spec(L,\nabla,k)$
the spectrum of the associated Laplace operator~$\Delta$
on $C^\infty(M, L^{\otimes k})$ (recall Notation and Remarks~\ref{prep}).
For a potential $Q\in C^\infty(M)$, we denote by $\Spec(Q;L,\nabla,k)$
the spectrum of $\Delta+Q$ on $C^\infty(M, L^{\otimes k})$.
\end{nota}

\begin{prop}
\label{mt}  Let $(M,g)$ be a compact Riemannian manifold, let $L$ be a Hermitian
line bundle over $M$, and let ${\nabla}$ be a Hermitian connection on $L$. Let
$G$ be a finite group that acts on $L$ carrying fibers to fibers, preserving
$\nabla$, and such that the induced action on $M$ is by isometries. For $i=1,2$,
suppose that ${\Gamma}_{i}$ is a subgroup of $G$ whose action on $M$ is free.
Thus $L_{i}:={\Gamma}_{i}{\backslash} L$, $i=1,2$ is a Hermitian line bundle
over $M_{i} :={{\Gamma}_{i}{\backslash} M}$, and $\nabla$ induces a connection
${\nabla}_{i}$ on $L_i$. If\/ ${\Gamma}_{1}$ and ${\Gamma}_{2}$ are almost
conjugate in $G$, then{\rm:}
\begin{itemize}
\item[(i)]
\[
{\Spec}(L_{1},\nabla_{1},k)={\Spec}(L_{2},\nabla_{2},k)
\]
for all positive integers $k$.

\item[(ii)]
If, moreover, $Q\in C^{\infty}(M)$ is a $G$-invariant function,  then
\[
{\Spec}(Q; L_{1},\nabla_{1},k)={\Spec}(Q; L_{2},\nabla_{2},k)
\]
for all positive integers $k$, where we use the same notation $Q$ for the smooth
potentials on $M_{1}$ and $M_{2}$ induced by the potential $Q$ on $M$.
\end{itemize}
\end{prop}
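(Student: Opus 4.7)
The plan is to deduce both statements directly from the transplantation principle of Remarks~\ref{rem.repcond}, applied to the right $G$-space $W := C^\infty(M, L^{\otimes k})$. The $G$-action on $L$ extends canonically to the tensor power $L^{\otimes k}$ and combines with the isometric action on $M$ to give a pullback right action on $W$. Since $G$ acts by Hermitian bundle automorphisms (this is implicit in $L_i := \Gamma_i\backslash L$ being a Hermitian line bundle) and preserves the Riemannian volume form on $M$, the induced $G$-action on $W$ is unitary with respect to the standard $L^2$ inner product.

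The first substantive step is the identification $W^{\Gamma_i} \cong C^\infty(M_i, L_i^{\otimes k})$. Since $\Gamma_i$ acts freely on $M$, the quotient map $p_i : M \to M_i$ is a Galois covering with deck group $\Gamma_i$, and by construction $p_i^* L_i^{\otimes k} = L^{\otimes k}$; hence pullback by $p_i$ identifies smooth sections of $L_i^{\otimes k}$ with smooth $\Gamma_i$-invariant sections of $L^{\otimes k}$. This identification scales the $L^2$ inner product by the common factor $|\Gamma_1|=|\Gamma_2|$ (equal because almost conjugate subgroups of $G$ have the same order), so it is an isometry after the obvious rescaling.

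The key step is to observe that the bundle Laplacian $\Delta$ on $W$ is $G$-equivariant: it is built intrinsically from the metric $g$ on $M$ and the connection $\nabla$ on $L^{\otimes k}$ via $\Delta = -\operatorname{trace}_g(\nabla^2)$, and both ingredients are $G$-invariant by hypothesis. Under the identification above, $\Delta|_{W^{\Gamma_i}}$ corresponds to the bundle Laplacian of $(L_i^{\otimes k},\nabla_i)$ over $(M_i, g_i)$, because $p_i$ is a local isometry and $\nabla$ descends to $\nabla_i$, so the computation of $\Delta$ is purely local and commutes with $p_i^*$. Applying the functoriality of transplantation (Remarks~\ref{rem.repcond}(iii)) to the $G$-equivariant endomorphism $\psi := \Delta : W \to W$ yields a commutative square in which the transplantation map $\mathcal{T} : W^{\Gamma_2} \to W^{\Gamma_1}$ intertwines the restricted Laplacians. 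Since the $G$-action on $W$ is unitary, Remarks~\ref{rem.repcond}(iii) also tells us that $\mathcal{T}$ is unitary; hence it extends to a unitary equivalence between the self-adjoint extensions of $\Delta_1$ and $\Delta_2$, proving (i).

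For part (ii), the same argument applies with $\Delta + Q$ in place of $\Delta$: multiplication by a $G$-invariant function $Q$ is $G$-equivariant, so $\Delta + Q$ is a $G$-equivariant endomorphism of $W$ that restricts, under the identifications $W^{\Gamma_i} \cong C^\infty(M_i, L_i^{\otimes k})$, to the Schr\"odinger operator $\Delta_i + Q$ on each quotient. The main obstacle, to my mind, is not the representation-theoretic bookkeeping but rather the careful verification that $\Delta|_{W^{\Gamma_i}}$ genuinely coincides with the Laplacian built from the quotient data $(L_i^{\otimes k},\nabla_i)$ and $(M_i, g_i)$; once this local geometric descent is in hand, the rest of the argument is formal.
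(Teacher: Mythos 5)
Your proposal is correct and follows essentially the same route as the paper: identify $C^\infty(M_i,L_i^{\otimes k})$ with the $\Gamma_i$-invariant sections of $L^{\otimes k}$ over $M$, observe that $\Delta+Q$ is a $G$-equivariant operator restricting to the quotient Schr\"odinger operators, and invoke the functoriality and unitarity of the transplantation map from Remarks~\ref{rem.repcond}. Your additional remarks on the $L^2$-normalization and the local nature of the descent of $\Delta$ are correct elaborations of details the paper leaves implicit.
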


This variant of Sunada's Theorem is essentially contained in R. Kuwabara
\cite{Ku90}, although his interest was in pairs of connections on the same
underlying  bundle and in the case $Q=0$.

For a proof by transplantation, observe that $G$ acts on the right on the space
${C^\infty}(M,{L^{\otimes k}})$ of smooth sections of ${L^{\otimes k}}$ by
$(f.g)(x)=g^{-1}.f(g.x)$ for $f\in{C^\infty}(M,{L^{\otimes k}})$, $g\in G$, and
$x\in M$. The space ${C^\infty}(M_i,{L_{i}^{\otimes k}})$ of smooth sections of
${L_{i}^{\otimes k}}$ may be identified with the space
${C^\infty}(M,{L^{\otimes k}})^{{\Gamma}_{i}}$ of ${\Gamma}_{i}$-invariant
elements of ${C^\infty}(M,{L^{\otimes k}})$. Thus by Remark \ref{rem.repcond},
we obtain a transplantation map $\mathcal{T}:{C^\infty}(M_2,{L_{2}^{\otimes
k}})\rightarrow{C^\infty}(M_1,{L_{1}^{\otimes k}})$.
Moreover, with this identification, the Schr\"{o}dinger operator $\Delta_{i}+Q$
on ${C^\infty}(M_i,{L_{i}^{\otimes k}})$ (associated with the Riemannian metric
on $M_{i}$, the connection ${\nabla}_{i}$, and the potential~$Q$) is the
restriction to ${C^\infty}(M,{L^{\otimes k}})^{{\Gamma}_{i}}$ of the
Schr\"{o}dinger operator $\Delta+Q$ of ${L^{\otimes k}}$. Since $\Delta$
commutes with the action of $G$ and since $Q$ is $G$-invariant, we may let
$\Delta+Q$ play the role of $\psi$ in Remark~\ref{rem.repcond}. It follows that
$\mathcal{T}$ intertwines the Schr\"{o}dinger operators $\Delta_{1}+Q$ and
$\Delta_{2}+Q$ on ${L_{1}^{\otimes k}}$ and ${L_{2}^{\otimes k}}$, thus proving
the theorem.

\begin{theorem}
\label{mc}
We use the notation and hypotheses of Proposition~\ref{mt}, part~(i).  Let
$-i\omega_j$ be the curvature of the connection $\nab_j$ on $L_j$, $j=1,2$.
Then
in the language of Notation~\ref{qe} and Remark~\ref{amb}, $(M,\omega_1)$ and
$(M_2,\omega_2)$ are quantum equivalent with respect to the connections
$\nab_1$ and $\nab_2$.
\end{theorem}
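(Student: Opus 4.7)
The plan is to reduce the claim directly to Proposition~\ref{mt}(ii) by choosing a suitable $G$-invariant potential. Unpacking Notation~\ref{qe}, quantum equivalence of $(M_1,\omega_1)$ and $(M_2,\omega_2)$ with respect to $\nabla_1$ and $\nabla_2$ means that, for every integer $k$, the operators
\[
\widehat{H}_{k}^{j}=-\tfrac{\hbar^{2}}{2}\bigl(-\Delta_{j}-\tfrac{1}{6}R_{j}\bigr)
=\tfrac{\hbar^{2}}{2}\Bigl(\Delta_{j}+\tfrac{1}{6}R_{j}\Bigr)
\]
on $L^{2}(M_{j},L_{j}^{\otimes k})$ have the same spectrum, where $R_{j}$ is the scalar curvature of $(M_{j},g_{j})$. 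Since multiplication by the scalar $\hbar^{2}/2$ only rescales spectra uniformly, it suffices to show that the Schr\"odinger operators $\Delta_{j}+\frac{1}{6}R_{j}$ on $L_{j}^{\otimes k}$ are isospectral for every $k$.

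The key observation is that $G$ acts on $M$ by isometries, so the scalar curvature $R\in C^{\infty}(M)$ is automatically $G$-invariant and descends, on each quotient $M_{j}=\Gamma_{j}\bs M$, to precisely the scalar curvature function $R_{j}$ of the induced Riemannian metric $g_{j}$. (The bundle $L_{j}=\Gamma_{j}\bs L$ inherits the connection $\nabla_{j}$ from $\nabla$, and since $G$ preserves $\nabla$, the curvature $-i\omega$ of $\nabla$ is $G$-invariant and descends to the curvature $-i\omega_{j}$ of $\nabla_{j}$; in particular $[\omega_{j}/2\pi]$ is integral as required for Notation~\ref{qe}.) We may therefore take $Q:=\tfrac{1}{6}R$ as the $G$-invariant potential in Proposition~\ref{mt}(ii).

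Applying Proposition~\ref{mt}(ii) with this choice of $Q$ yields
\[
\Spec\bigl(\tfrac{1}{6}R;\,L_{1},\nabla_{1},k\bigr)
=\Spec\bigl(\tfrac{1}{6}R;\,L_{2},\nabla_{2},k\bigr)
\]
for every positive integer $k$, i.e.\ the operators $\Delta_{1}+\tfrac{1}{6}R_{1}$ and $\Delta_{2}+\tfrac{1}{6}R_{2}$ are isospectral on the respective tensor powers of $L_{j}$. Multiplying by $\hbar^{2}/2=1/(2k^{2})$ gives $\Spec(\widehat{H}_{k}^{1})=\Spec(\widehat{H}_{k}^{2})$ for all $k$, which is exactly the definition of quantum equivalence. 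There is no real obstacle here; the only substantive point is the recognition that the $R/6$ term in the quantum Hamiltonian is a $G$-invariant potential in the sense required by part~(ii) of the proposition, so no extension of the Sunada-style transplantation argument beyond what has already been established is needed.
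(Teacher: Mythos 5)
Your proposal is correct and follows exactly the paper's own argument: the paper likewise proves Theorem~\ref{mc} by applying Proposition~\ref{mt}(ii) with the scalar curvature $R$ (equivalently $\tfrac{1}{6}R$) as the $G$-invariant potential, invariance being automatic because $G$ acts by isometries. The only difference is that you spell out the harmless rescaling by $\hbar^2/2$ and the descent of $R$ to the quotients, which the paper leaves implicit.
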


\begin{proof}  We apply part~(ii) of Proposition~\ref{mt} with the scalar
curvature $R$ of $M$ in the role of $Q$.  Note that $R$ is necessarily
$G$-invariant since $G$ acts by isometries on $M$.
\end{proof}

\subsection{Construction of examples}\label{ex}\

Let $(M, g, \omega)$ be a K\"ahler manifold of complex dimension $n$. (Here
$\omega$ is the K\"ahler
form.) The canonical line bundle $L_{M}$ over $M$ is defined to be the $n$th
exterior power of the holomorphic cotangent bundle. Since $M$ is K\"ahler, the
Levi-Civita connection on $TM$ commutes with the complex structure and thus
defines a holomorphic connection on the holomorphic tangent bundle. This
connection gives rise to a holomorphic connection on $L_{M}$ that we will call
the \emph{canonical connection}.

If $X$ is a simply-connected Hermitian
symmetric space of non-compact type and $M$ is
a compact locally symmetric space with universal covering $X$, we will call $M$
an $X$-space. Every $X$-space $M$ is a Hodge manifold, i.e., $M$ is a K\"ahler
manifold and a suitable real multiple
of the K\"ahler form $\omega$ of $M$ represents an integer cohomology
class. More precisely, if the metric is rescaled such that $X$ (and hence
each $X$-space $M$) has Einstein constant~$-1$
then the Chern class of the canonical bundle $L_{M}$ is
$[\omega/2\pi]$ (see~\cite{Ba}, formulas (4.68) and~(4.59);
compare also~\cite{We}, p. 219.) As in Remark~\ref{amb}, the notion of
``quantum equivalence'' of $(M_1,\omega_1)$ and $(M_2,\omega_2)$, where the
$M_i$ are $X$-spaces and $\omega_i$ their K\"ahler forms, will mean with respect
to the canonical connections on the canonical bundles $L_{M_i}$.


\begin{theorem}\label{herm}
Let $X$ be a simply-connected Hermitian symmetric space of noncompact type of
real dimension at least four. Then there exist arbitrarily large families of
non-isometric $X$-spaces $M_{i}$ such that the $(M_i,\omega_i)$ are all mutually
quantum equivalent but not classically equivalent. In fact the phase spaces
$(T^*M_i, \Omega_i)$ of the classical Hamiltonian systems are not
symplectomorphic {\rm(}or even homeomorphic{\rm)}.
\end{theorem}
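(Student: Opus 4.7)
The plan is to combine McReynolds's construction of large families of Sunada-isospectral locally symmetric spaces with the line bundle variant of Sunada's theorem established in Proposition~\ref{mt} / Theorem~\ref{mc}, and then invoke Mostow rigidity to upgrade non-isometry to non-homeomorphism.

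First I would invoke McReynolds's result to obtain, for any prescribed integer $N$, a finite group $G$, a compact $X$-space $\widetilde M$ on which $G$ acts freely by isometries, and subgroups $\Gamma_1,\dots,\Gamma_N\le G$ that are pairwise almost conjugate in $G$ and yet produce pairwise non-isometric quotients $M_i:=\Gamma_i\backslash\widetilde M$. Because $X$ is Hermitian symmetric of noncompact type, its isometry group acts by holomorphic (or conjugate-holomorphic) transformations; by replacing $G$ with the index-at-most-two subgroup of holomorphic isometries if necessary, I can assume $G$ acts by \emph{holomorphic} isometries of $\widetilde M$. This is the key structural point: a holomorphic isometry of a K\"ahler manifold lifts canonically to an action on the canonical line bundle $L=\bigwedge^n T^*\widetilde M$ preserving the canonical connection $\nabla$, so the hypotheses of Proposition~\ref{mt} are met, with $\nabla$ inducing the canonical connections $\nabla_i$ on the canonical bundles $L_i=\Gamma_i\backslash L$ of the $M_i$.

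Next, Theorem~\ref{mc} applied to $(\widetilde M,L,\nabla)$ and the pairs $(\Gamma_i,\Gamma_j)$ yields mutual quantum equivalence of the $(M_i,\omega_i)$ with respect to the canonical connections, which by the discussion in Subsection~\ref{ex} is precisely the natural choice dictated by the Hodge condition $[\omega_i/2\pi]=c_1(L_i)$ for the Einstein-normalized Hermitian locally symmetric setting.

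For the classical non-equivalence, I would argue at the level of topology: since the $M_i$ are locally symmetric of noncompact type of real dimension $\ge 4$, Mostow rigidity (in its complex-hyperbolic and higher-rank forms, both of which cover all irreducible Hermitian symmetric spaces of noncompact type of real dimension $\ge 4$) tells us that a homotopy equivalence between two such quotients is homotopic to an isometry. Hence non-isometric $M_i$ are not even homotopy equivalent, so certainly not homeomorphic; and since $T^*M_i$ deformation retracts onto $M_i$, the phase spaces $T^*M_i$ are pairwise non-homotopy-equivalent, in particular non-homeomorphic, a fortiori not symplectomorphic. So the classical systems $(T^*M_i,\Omega_i,H_i)$ cannot be equivalent in any reasonable sense.

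The main obstacle I anticipate is the first step: ensuring that McReynolds's construction can be arranged to produce arbitrarily many \emph{pairwise non-isometric} quotients in the given Hermitian symmetric setting, and simultaneously that the acting finite group $G$ can be taken inside the holomorphic isometry group so that the lift to the canonical bundle and its connection is automatic. Once these inputs are secured, the rest is a formal application of Proposition~\ref{mt} together with Mostow rigidity, with no further analytic work required.
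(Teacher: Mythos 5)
Your proposal is correct and follows essentially the same route as the paper: McReynolds's Sunada-based families, the observation that holomorphic isometries preserve the canonical bundle and its canonical connection so that Proposition~\ref{mt}/Theorem~\ref{mc} applies, and Mostow rigidity plus the homotopy equivalence $T^*M_i\simeq M_i$ to rule out homeomorphism of the phase spaces. Your extra care about passing to the holomorphic isometries (excluding anti-holomorphic ones) is a reasonable refinement of a point the paper states without comment, but it does not change the argument.
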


\begin{proof}
D.B. McReynolds \cite{Mcr} showed, using Sunada's Theorem, that for every
simp\-ly-con\-nected symmetric space $X$ of non-compact type, there exist
arbitrarily large collections of non-isometric $X$-spaces $M_{i}$ whose
Laplace-Beltrami operators are mutually isospectral. For each such collection,
there exists an $X$-space $M$ and a finite group $G$ of isometries of $M$ such
that $M_{i}={{\Gamma}_{i}{\backslash}M}$, where the ${\Gamma}_{i}$ are almost
conjugate subgroups of $G$. In the setting that $X$ is Hermitian symmetric, the
isometries are holomorphic. Since all holomorphic isometries of $M$ preserve
both the canonical bundle and the canonical connection, we can now apply
Theorem~\ref{mc} to see that $(M_{i},\omega_{i})$ and $(M_{j},\omega_{j})$
are quantum equivalent for all $i,j$.

Mostow Strong Rigidity tells us that the various $M_{i}$ have non-isomorphic
fundamental groups, i.e., $M_{i}=\Lambda_{i}{\backslash}X$ with $\Lambda_{i}$
and $\Lambda_{j}$ non-isomorphic discrete uniform subgroups of the group of
isometries of $X$ when $i\neq j$. The cotangent bundle $T^{\ast}M_{i}$ is the
quotient of the (trivial) bundle $T^{\ast}X$ by the action of $\Lambda_{i}$ and
thus the various $T^{\ast}M_{i}$ are also non-homeomorphic.
\end{proof}

The assumption on the dimension of $X$ in Theorem~\ref{herm}, equivalently the
exclusion of the case that $X$ is the real hyperbolic plane, was needed only so
that the phase spaces for the classical Hamiltonian systems would not be
homeomorphic.  In fact, we have the following:

\begin{prop} Let $(M_1,g_1)$ and $(M_2,g_2)$ be \emph{any} pair of
hyperbolic Riemann surfaces which are iso\-spec\-tral with respect to the
Laplace-Beltrami operator acting on functions.
Then $(M_1,\omega_1)$ and $(M_2,\omega_2)$ are quantum
equivalent, where $\omega_i$ is the K\"ahler form of
$(M_i,g_i)$.
\end{prop}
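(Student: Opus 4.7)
The plan is to reduce quantum equivalence of $(M_i,\omega_i)$ to Laplace-isospectrality of the principal circle bundles $P_i$ carrying the Kaluza--Klein metric, and then to the length spectrum of $M_i$, where Huber's theorem closes the loop.

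First I would eliminate the scalar-curvature contribution. Both $(M_i,g_i)$ have constant negative scalar curvature $R_i$, and Laplace-isospectrality forces $\vol(M_1)=\vol(M_2)$ by Weyl's law and equal Euler characteristic via the standard heat invariants, hence $R_1=R_2$ by Gauss--Bonnet. Consequently $\widehat H^i_k=-\frac{\hbar^2}{2}(-\Delta_{L_i^{\otimes k}}-\frac{1}{6}R_i)$ differs from $\frac{\hbar^2}{2}\Delta_{L_i^{\otimes k}}$ by the same additive constant for both $i$, and quantum equivalence reduces to showing that $\Spec(L_i,\nabla_i,k)$ is independent of $i$ for every integer $k$.

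Next I would invoke the Kaluza--Klein description from Notation and Remarks~\ref{prep}. Writing $M_i=\Gamma_i\backslash\mathbb{H}$ with $\Gamma_i$ a cocompact torsion-free subgroup of $G:=\mathrm{PSL}(2,\mathbb{R})$, the principal $S^1$-bundle $P_i$ underlying $L_i$, equipped with the Kaluza--Klein metric $\widetilde g_i$, is isometric to $\Gamma_i\backslash G$ carrying a fixed left-$G$-invariant metric $\widetilde g$ independent of $i$; the circle $S^1\cong\mathrm{PSO}(2)$ acts by right translation. By Notation and Remarks~\ref{prep}, $\Spec(L_i,\nabla_i,k)$ is, up to the common shift $-4k^2\pi^2$, the spectrum of $\Delta_{P_i}$ on the isotypic subspace $C^\infty_k(P_i)\subset L^2(P_i)$.

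The last step is the Selberg trace formula on $\Gamma_i\backslash G$. Since $\widetilde g$ lifts to a left-invariant metric on $G$ and the isotypic projection onto $C^\infty_k(P_i)$ is right-$\mathrm{PSO}(2)$-equivariant, the operator $e^{-t\Delta_{P_i}}$ restricted to $C^\infty_k(P_i)$ is given by right convolution with a fixed $\mathrm{PSO}(2)$-bi-equivariant kernel on $G$ (independent of $i$), and its trace on $\Gamma_i\backslash G$ decomposes as a sum over $\Gamma_i$-conjugacy classes whose geometric side, in the cocompact case, involves only $\vol(M_i)$ and the lengths of the closed geodesics of $M_i$. Huber's classical theorem guarantees that Laplace-isospectral cocompact hyperbolic surfaces share the same length spectrum, so combined with Weyl's law the weight-$k$ trace formulas for $i=1$ and $i=2$ agree on every heat-kernel test function, forcing $\Spec(L_1,\nabla_1,k)=\Spec(L_2,\nabla_2,k)$ for all $k$. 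The main obstacle, and the only point that requires genuine verification, is precisely this equivariant trace formula: one must confirm that the weight-$k$-equivariant heat trace really computes the bundle Laplacian spectrum on $L_i^{\otimes k}$ and that its geometric side contains no data beyond the length spectrum and volume of $M_i$. This is standard in the theory of weight-$k$ automorphic forms (as in Hejhal), but it is where the substantive work lies.
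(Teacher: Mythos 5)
Your argument is correct in outline, but it takes a genuinely different route from the paper. The paper's proof is a one-line application of a theorem of Pesce \cite{Pes}: Laplace-isospectral compact hyperbolic surfaces $\G_1\bs X$ and $\G_2\bs X$ are automatically \emph{strongly} isospectral, because isospectrality on functions forces the quasi-regular representations of $G=PSL(2,\R)$ on $L^2(\G_i\bs G)$ to be unitarily equivalent; since the canonical bundle and its tensor powers are homogeneous and the canonical Laplacian is $G$-natural, all the spectra $\Spec(L_i,\nab_i,k)$ then coincide at once, with no case-by-case computation. Your route instead compares, for each fixed $k$, the weight-$k$ Selberg trace formulas on $\G_i\bs G$ and closes the loop with Huber's theorem. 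This works, and it is more concrete, but it puts all the weight on the step you yourself flag: one must check that the hyperbolic orbital integrals in the weight-$k$ formula carry no data beyond the lengths. That is true here for a specific reason worth making explicit: the bundle is the canonical one with trivial multiplier system, and the holonomy of the canonical connection around a closed geodesic is trivial (parallel transport along a geodesic fixes its tangent vector), so each conjugacy class contributes a function of its translation length alone; for a general line bundle and connection the geometric side would involve holonomies not determined by the length spectrum, and the argument would fail. The representation-theoretic route absorbs this issue automatically, which is what it buys you; your route buys an explicit mechanism and makes visible exactly which geometric data ($\vol$ and the length spectrum, both spectrally determined) control the weight-$k$ spectra. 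One small simplification: since both surfaces are hyperbolic, $R_1=R_2=-2$ identically, so no appeal to heat invariants or Gauss--Bonnet is needed to dispose of the scalar-curvature term.
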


We emphasize that, in contrast to the Hermitian locally symmetric spaces in
Theorem~\ref{herm}, the Riemann surfaces are \emph{not} required to satisfy the
conditions of Sunada's Theorem.

\begin{proof}
H. Pesce \cite{Pes} proved that every pair of isospectral compact
Riemann surfaces $(M_1,g_1)$ and $(M_2,g_2)$ is \emph{strongly} isospectral in
the following sense:  Let $G=PSL(2,\R)$.  A Hermitian vector bundle $E$ over the
hyberbolic plane $X$ is said to be homogeneous if $G$ acts on $E$, carrying
fibers to fibers, such that the induced action on $X$ is the standard action by
isometries.  The actions of $G$ on $M$ and $E$ give rise to an action of $G$ on
the space of smooth sections of $E$.  A self-adjoint elliptic differential
operator $D$ on $E$ (i.e., on smooth sections of $E$) is said to be
\emph{natural} if it commutes with the $G$-action.  In that case, if $\G$ is a
discrete subgroup of $G$ acting freely and properly discontinuously on $X$, then
$D$ induces a self-adjoint elliptic differential operator on the bundle $\G\bs
E$ over the Riemann surface $\G\bs M$.  Compact Riemann surfaces $M_1=\G_1\bs X$
and $M_2=\G_2\bs X$ are said to be \emph{strongly isospectral} if for each
homogeneous Hermitian vector bundle $E$ over $X$ and each natural self-adjoint
elliptic operator $D$ on $E$, the induced operators on the bundles $\G_1\bs E$
over $M_1$ and $\G_2\bs E$ over $M_2$ are isospectral.   (Aside:  The key point
in proving that isospectral compact Riemann surfaces $M_1=\G_1\bs X$ and
$M_2=\G_2\bs X$ are always strongly isospectral is that isospectrality of the
Riemann surfaces implies that the representations of $G$ induced by the trivial
representations of $\G_1$ and $\G_2$ are equivalent.  This condition is
considerably weaker than the Sunada condition, which requires that $\G_1$ and
$\G_2$ be subgroups of some finite subgroup $\G$ of $G$ and that the trivial
representations of $\G_1$ and $\G_2$ induce equivalent representations of $\G$.)

The proposition follows from that fact that the canonical Hermitian line bundle
over $X$ and all its tensor powers are homogeneous, and the Laplacian associated
with the canonical connection is natural.
\end{proof}

\begin{remark}  Using Sunada's technique, R. Brooks, R. Gornet, and W. Gustafson
\cite{BGG} constructed arbitrarily large finite families of mutually
isospectral, non-isometric Riemann surfaces.  (Their work motivated that of
D.B.~McReynolds cited above.)  While the vast majority of known isospectral
Riemann surfaces were constructed by Sunada's technique, M.-F.~Vign\'eras's
examples  \cite{V} and recent examples of  C.S.~Rajan \cite{R} do not satisfy
the Sunada condition.
\end{remark}

\section{Isospectral connections and potentials on a line bundle and its
tensor powers}

Using a trick introduced by R. Brooks \cite{Br}, we can use Proposition~\ref{mt}
to obtain isospectral connections and potentials on a single line bundle and its
tensor powers.

\begin{cor}
\label{corm} In addition to the hypotheses of Proposition~\ref{mt}, assume that
there exists a bundle map $\sigma$ of $L$, projecting to an isometry (also to be
denoted $\sigma$) of $M$, such that $\sigma$ normalizes $G$ and such that
$\sigma{{\Gamma}_{1}}\sigma^{-1}={{\Gamma}_{2}}$. Continue to denote by $\sigma$
the induced bundle map from ${L_{1}^{\otimes k}}$ to ${L_{2}^{\otimes k}}$. Then
\[
{\Spec}(Q; L_{1},\nabla_{1},k)={\Spec}(\sigma^{*}Q;
L_{1},\sigma^{*}\nabla_{2},k)
\]
for all positive integers $k$.
\end{cor}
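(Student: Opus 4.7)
The plan is to combine Proposition~\ref{mt}(ii) with a pullback through the bundle isomorphism that $\sigma$ descends to between $L_1$ and $L_2$. Concretely, Proposition~\ref{mt}(ii) applied to the connections $\nabla_1,\nabla_2$ and the $G$-invariant potential $Q$ already yields
\[
\Spec(Q; L_1, \nabla_1, k) = \Spec(Q; L_2, \nabla_2, k),
\]
so the task reduces to identifying the right-hand side with $\Spec(\sigma^*Q; L_1, \sigma^*\nabla_2, k)$. This is exactly Brooks's trick: the bundle map $\sigma$ allows us to transport the Schr\"odinger operator on $L_2^{\otimes k}$ unitarily to a Schr\"odinger operator on the \emph{same} bundle $L_1^{\otimes k}$, but now built from the pulled-back connection and potential.

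Next I would exploit the conjugation hypothesis $\sigma\Gamma_1\sigma^{-1}=\Gamma_2$: for any $\gamma_1\in\Gamma_1$ one can write $\sigma\gamma_1=(\sigma\gamma_1\sigma^{-1})\sigma$ with $\sigma\gamma_1\sigma^{-1}\in\Gamma_2$, so $\sigma$ sends each $\Gamma_1$-orbit in $L$ into a $\Gamma_2$-orbit and thus descends to a Hermitian bundle isomorphism $\bar\sigma\colon L_1\to L_2$ covering an isometry $M_1\to M_2$. Because $\sigma$ normalizes $G$ and both $\nabla$ and $Q$ are preserved by $G$, the objects $\sigma^*\nabla$ and $\sigma^*Q$ on $L$ (respectively $M$) are again $G$-invariant, so in particular $\Gamma_1$-invariant, and hence descend to the connection $\sigma^*\nabla_2$ on $L_1$ and the potential $\sigma^*Q$ on $M_1$; by construction these descents agree with $\bar\sigma^*\nabla_2$ and $\bar\sigma^*Q$. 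Pulling sections back through $\bar\sigma$ is then a unitary isomorphism $C^\infty(M_2,L_2^{\otimes k})\to C^\infty(M_1,L_1^{\otimes k})$ that intertwines $\Delta_{\nabla_2}+Q$ with $\Delta_{\sigma^*\nabla_2}+\sigma^*Q$, giving
\[
\Spec(Q; L_2, \nabla_2, k)=\Spec(\sigma^*Q; L_1, \sigma^*\nabla_2, k),
\]
and concatenating with the previous identity finishes the proof.

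The main (and essentially only) obstacle is the bookkeeping to verify that all descended objects are well-defined and that $\bar\sigma^*$ actually intertwines the two Schr\"odinger operators. This is purely formal: a Hermitian bundle isometry covering a Riemannian isometry automatically carries the connection Laplacian of any connection to the connection Laplacian of its pullback, and pulls a multiplication potential back to multiplication by the pulled-back function. Nothing deeper than the normalization of $G$ by $\sigma$ is required for these verifications.
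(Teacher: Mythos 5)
Your proof is correct and is exactly the intended argument: the paper leaves the corollary's proof implicit (citing Kuwabara for $Q=0$), but the route is precisely Proposition~\ref{mt}(ii) followed by transporting the Schr\"odinger operator on $L_2^{\otimes k}$ back to $L_1^{\otimes k}$ via the bundle isomorphism that $\sigma$ descends to, which exists because $\sigma\Gamma_1\sigma^{-1}=\Gamma_2$. Your bookkeeping on the descent of $\sigma^*\nabla$ and $\sigma^*Q$ and the unitarity of the pullback is the right (and only) thing to check.
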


This corollary is contained in Kuwabara~\cite{Ku90} for the case $Q=0$.
\begin{remark} One may choose $\nab$ to be $\sigma$-invariant as well as
$G$-invariant, in which case $\nab_1=\sigma^*\nab_2$.   We can then conclude
that $Q_1$ and $\sigma^*Q_2$ are isospectral potentials for the Schr\"odinger
operator $-\Delta_1 + $potential.
\end{remark}

We now explain how to use the corollary to obtain examples in which the base
manifolds are Riemann surfaces. Brooks ~\cite{Br} gave explicit examples of
finite groups~$G$ and Riemann surfaces $(M,g)$ (with a hyperbolic Riemannian
metric~$g$) such that the following conditions are satisfied:

\begin{itemize}
\item[(i)] The group $G$ acts freely by orientation preserving isometries on the
oriented Riemann surface $(M,g)$.

\item[(ii)] There exists a pair of almost conjugate, nonconjugate subgroups
$\Gamma_{1}$, $\Gamma_{2}$ of~$G$.

\item[(iii)] There exists an outer automorphism~$\tau$ of~$G$ such that
$\Gamma_{2} =\tau\Gamma_{1}\tau^{-1}$ and such that the action of~$G$ extends to
a free action of the semi-direct product $\widehat G$ of $G$ and $\langle
\tau\rangle$ on~$(M,g)$ by orientation-preserving isometries.
\end{itemize}

Using these objects we obtain the following class of examples.

\begin{example}
\label{surf} We choose $(M,g)$, $G$, $\Gamma_{1}$, $\Gamma_{2}$, $\tau$,
$\widehat G$ as above and consider the Hermitian line bundle $L_{N}$ over $N:=\widehat
G\backslash M$. Denote its pullback to~$M$ by~$L$. The group~$\widehat G$ acts
on~$L$ by vector bundle isomorphisms. We choose a $\widehat G$-invariant Hermitian
connection $\widehat{\nabla}$ on~$L$ by pulling back a Hermitian connection from
$L_{N}$, and we choose a function $f\in C^{\infty}(M)$ which is $G$-invariant
but not $\tau$-invariant. Denoting the Riemannian volume form on~$M$
by~$\omega$, we let ${\nabla}:=\widehat{\nabla}+i\,d^*(f\omega)$. Note that
${\nabla}$ is $G$-invariant, but not $\tau$-invariant. Moreover, we choose any
$G$-invariant potential $Q\in C^{\infty}(M)$. Finally, we let~$\sigma$ denote
the vector bundle isomorphism of~$L$ induced by~$\tau$. Applying
Proposition~\ref{mt} together with Corollary~\ref{corm} we obtain, for the
vector bundle $L_{1}:=\Gamma_{1}\backslash L$ over $M_{1}:=\Gamma_{1}\backslash
M$ and the induced connections ${\nabla}_{1}$ on~$L_{1}$, resp.~${\nabla}_{2}$
on $L_{2}:=\Gamma_{2}\backslash L$:
\[
{\Spec}(Q;L_{1},{\nabla}_{1},k)={\Spec}(\sigma^{*}Q;L_{1},\sigma^{*}{\nabla}_{2}
,k)
\]
for all $k$.
\end{example}

\begin{remark}

(i) The choice of~${\nabla}$ in the previous example guarantees that the
resulting
pairs of isospectral connections ${\nabla}_{1}$ and~$\sigma^{*}{\nabla}_{2}$
have different curvature. In fact, the pullbacks to~$L$ of the connections
${\nabla}_{1}$ and $\sigma^{*}{\nabla}_{2}$ on~$L_{1}$ are $\widehat{\nabla}+
i\,d^*(f\omega)$ and $\widehat{\nabla}+ i\,d^*((\tau^{*}f)\omega)$,
respectively. The pullback to~$M$ of the difference of the corresponding
curvature forms on~$M_{1}$ is given by $i\,d d^*((f-\tau^{*}f)\omega)$.
The $2$-form $(f-\tau^{*}f)\omega$ has integral zero over~$M$ and is thus exact
by Poincar\'e duality. On the other hand, this form is nonzero by our choice
of~$f$, and hence nonharmonic. This immediately implies that
$d d^*((f-\tau^{*}f)\omega)\ne0$, as claimed.

(ii) Let $\widetilde\tau$ denote some lift of~$\tau$ to the hyperbolic plane~$H_2$,
and let $\widetilde G$, $\widetilde\Gamma_i$ denote the groups of all lifts of elements
of $G$, resp.~$\Gamma_i$, to~$H_2$. Let $N(\widetilde\Gamma_1)$ denote the
normalizer of~$\widetilde\Gamma_1$ within $\operatorname{Isom}(H_2)$.
Then $\widetilde\tau\notin\widetilde G\widetilde\varphi$ for any $\widetilde\varphi\in
N(\widetilde\Gamma_1)$
because, otherwise,
the relation $\widetilde\tau\widetilde\Gamma_1\widetilde\tau^{-1}=\widetilde\Gamma_2$ would
imply
that $\Gamma_1$ and $\Gamma_2$ were conjugate in~$G$.
Note that $N(\widetilde\Gamma_1)$ consists precisely of the lifts of isometries
of~$M_1$.
Therefore the fact that $\widetilde\tau\notin\widetilde G\widetilde\varphi$
for all $\widetilde\varphi\in N(\widetilde\Gamma_1)$
implies that it is possible to choose
the $G$-invariant function~$f$
subject to the slightly stronger property
that the functions $f_1$ and $f_1^\tau$
which are induced by $f$ and $\tau^*f$ on~$M_1$, respectively,
do not differ by any isometry of~$M_1$.
Then, for any isometry $\varphi$ of~$M_1$ we can apply the
argument of~(i) to the the lift of $f_1-\varphi^* f_1^\tau$ to $M$
and conclude that now the curvature forms associated with
$\nabla_1$ and $\sigma^*\nabla_2$ are not related by
pullback by any isometry of~$M_1$.
\end{remark}

\end{document}